\documentclass[12pt]{amsart}
\usepackage{amssymb,amsmath,amsfonts,amsthm,nomencl,mathrsfs} 
\usepackage[arrow, matrix, curve]{xy}
\usepackage[latin1]{inputenc}
\usepackage{a4wide}

\newcommand{\IR}{\mathbb{R}}

\newcommand{\question}[1]{\leavevmode{\marginpar{\tiny%
$\hbox to 0mm{\hspace*{-0.5mm}$\leftarrow$\hss}%
\vcenter{\vrule depth 0.1mm height 0.1mm width \the\marginparwidth}%
\hbox to 0mm{\hss$\rightarrow$\hspace*{-0.5mm}}$\\\relax\raggedright #1}}}

\newcommand{\ICC}{\mathsf{C}^{\infty}}

\newcommand{\IFF}{\mathscr{F}}
\newcommand{\IAA}{\mathscr{A}}

\newcommand{\IL}{\mathsf{L}}

\newcommand{\IN}{\mathbb{N}}

\newcommand{\IP}{\mathbb{P}}

\newcommand{\Id}{{\rm d}}

\newcommand{\f}{\frac}
\newcommand{\nn}{\nonumber}



\theoremstyle{plain}            
\newtheorem{theorem}{theorem}[section]
\newtheorem{Lemma}[theorem]{Lemma}
\newtheorem{Corollary}[theorem]{Corollary}
\newtheorem{Theorem}[theorem]{Theorem}
\newtheorem{Proposition}[theorem]{Proposition}
\newtheorem{Propandef}[theorem]{Proposition and definition}

\theoremstyle{definition}       
\newtheorem{Definition}[theorem]{Definition}

\newtheorem{Remark}[theorem]{Remark}

\allowdisplaybreaks[1]

\begin{document}

\begin{titlepage}
\title[]{On the semimartingale property of Brownian bridges on complete manifolds}

\author[B. G\"uneysu]{Batu G\"uneysu}

\end{titlepage}

\maketitle 

\begin{abstract}
I prove that every adapted Brownian bridge on a geodesically complete connected Riemannian manifold is a semimartingale including its terminal time, without any further assumptions on the geometry. In particular, it follows that every such process can be horizontally lifted to a smooth principal fiber bundle with connection, including its terminal time. The proof is based on a localized Hamilton-type gradient estimate by Arnaudon/Thalmaier.
\end{abstract}

%

\section{Introduction}

Given $x,y\in \IR^m$, $T>0$, let $\Omega_T(\IR^m)$ stand for the Wiener space of continuous paths $\omega:[0,T]\to \IR^m$. We denote with $\IP^{x,T}$ the usual Euclidean Wiener measure (=Brownian motion measure) and with $\IP^{x,y,T}$ the usual Euclidean pinned Wiener measure (=Brownian bridge measure) on $\Omega_T(\IR^m)$ with its Borel-sigma-algebra $\IFF^T$. Then with $X$ the coordinate process on $\Omega_T(\IR^m)$ and $\IFF^T_*=(\IFF^T_t)_{t\in [0,T]}$ the filtration of $\IFF^T$ that is generated by $X$, the following important fact is well-known to hold true:
\begin{itemize}
\item[(SM)] \emph{$(X_t)_{t\in [0,T]}$ is a continuous semimartingale with respect to $(\IP^{x,y,T}, \IFF^T_*)$}. 
\end{itemize}

Let us point out here that, as for all $0<t<T$ one has $\IP^{x,y,T}|_{\IFF^T_t}\sim  \IP^{x,T}|_{\IFF^T_t}$, the property (SM) only becomes nontrivial at $t=T$. Furthermore, the importance of (SM) is already clear at a very fundamental level: Continuous disintegrations of probabilistic formulae for covariant Schrödinger semigroups \cite{G1,G2} clearly require such a result. We refer the reader to \cite{G1} for such a continuous disintegration in the Euclidean case. \vspace{1mm}

In this paper we will be concerned with the validity of the semimartingale property (SM) on noncompact Riemannian manifolds. To this end, we start by recalling that given a connected Riemannian manifold $M$ of dimension $m$, the corresponding Riemannian data $\Omega_T(M)$, $\IP^{x,y,T}$, $\IP^{x,T}$, $X$, $\IFF^T$, $\IFF^T_*$, as well as the question whether one has (SM) or not still make sense: One just has to take the minimal positive heat kernel $p(t,x_1,x_2)$ everywhere in the definition of the underlying measures, replacing the Euclidean heat kernel $t^{-m/2}\mathrm{e}^{-|x_1-x_2|^2/(4t)}$ (cf. Definition \ref{wiener} below), and to note that $X$ is a continuous $M$-valued semimartingale, if and only if $f(X)$ is a real-valued one, for all smooth functions $f:M\to \IR$. In fact (SM), has been established quite some time ago (1984) on \emph{compact $M$'s} by Bismut \cite{bismut}, who used the resulting \lq\lq{}covariant\rq\rq{} continuous disintegration in his proof of the Atiyah-Singer index theorem \cite{bisat} (the reader may also wish to consult \cite{driver} and \cite{Hsu}).\\

Concerning (SM) for noncompact $M\rq{}s$, we point out that this property has been stated in \cite{aida} under very restrictive geometric assumptions, such as a bounded Ricci curvature plus a positive injectivity radius, and indeed there seems to be a widely spread belief that (SM) requires some global curvature bounds in order to hold true. The reason for this might be that if one follows the typical proofs from the compact case too closely, it is tempting to believe that one needs to establish the integrability
\begin{align}\label{intro3}
\mathbb{E}^{x,y,T}\left[\int^T_0 \big|\Id \log p(T-t,\bullet,y)(X_t)\big| \Id t\right]<\infty,
\end{align}
which in the compact case is proved using a global gradient estimate of the form
\begin{align}\label{intro4}
|\Id \log p( t,\bullet,x_1)(x_2)|\leq C_{T}\big(t^{-1/2} +t^{-1}\Id(x_1,x_2)\big) \quad\text{ for all $x_1,x_2\in M$, $0<t\leq T$,}
\end{align}
an inequality that certainly requires global curvature bounds. We point out here that problems like (\ref{intro3}) arise naturally in this context, as under $(\IP^{\bullet,y,T}, \IFF^T_*)$ the process $X|_{[0,T)}$ is a diffusion which is generated by the time-dependent differential operator
\begin{align*}
[\IAA^y_t f](z)= (1/2)\Delta f(z) - \big(\Id \log p(T-t, \bullet ,y)(z), \Id f\big),\quad \text{$f\in\ICC(M)$, $0<t<T$, $z\in M$,}
\end{align*}
where $\Delta$ denotes the Laplace-Beltrami operator, and where $\Id F$ stands for the differential of a function $F:M\to \IR$. In particular, as $p(t,x_1,x_2)$ becomes singular near $t=0$, it is clear that \emph{some} upper bound on $|\Id \log p( t,\bullet,x_1)(x_2)|$ for small $t$ has to be established in any case. It is also instructive to note that in the Euclidean case one has
\begin{align*}
[\IAA^y_t f](z)= (1/2)\Delta f(z) + (1/2)(T-t)^{-1} \sum^m_{i=1} (y^i-z^i) \partial_i f(z),\quad z=(z^1,\dots,z^m)\in\IR^m.
\end{align*}

\emph{In this paper, we are going to prove that (SM) holds true on every geodesically complete connected Riemannian manifold, without any further curvature assumptions.} \vspace{2mm}

In fact, this will be a consequence of our main result Theorem \ref{main} below. Let us continue with some comments on the proof of Theorem \ref{main}, with a view towards the above mentioned technical problems: Firstly, a very simple but nevertheless essential observation is that the semimartingale property of a manifold-valued continuous adapted process $X\rq{}$ can be geometrically localized, in the sense that $X\rq{}$ is a semimartingale if and only if $f(X\rq{})$ is a real-valued one, for all smooth \emph{compactly supported} functions $f$ on the manifold. This fact can be used to deduce that for (SM) to hold it is actually enough to prove the integrability 
\begin{align}\label{intro5}
\mathbb{E}^{x,y,T}\left[\int^T_0 \big|\Id \log p(T-t,\bullet,y)(X_t)\big| \big|\Id f(X_t)\big|\Id t\right]<\infty\quad\text{ for all $f\in\ICC_{\mathrm{cpt}}(M)$}.
\end{align}
Still, one is faced with the problem of establishing a localized version of (\ref{intro4}). In this context, using a highly subtle local parabolic gradient bound by Arnaudon/Thalmaier from 2010 \cite{thal}, we are able to prove the following inequality:\emph{Namely, in Proposition \ref{help} we show that for every $z_0\in M$, $R_0>0$ there exists a constant $C>0$ which depends on the geometry of $M$ in a neighbourhood of $B(z_0,R_0)$, such that for all 
$$
(t,x_1,x_2)\in (0,R_0]\times B(z_0,R_0) \times B(z_0,R_0)
$$
one has }
\begin{align*}
\left|\Id \log p(t,\bullet,x_1)(x_2)\right|\leq C\big(t^{-1/2} +t^{-1}\Id(x_1,x_2)\big).
\end{align*}
Ultimately, we show that the latter localized bound is enough to establish (\ref{intro5}).\vspace{2mm}

Finally, we would like to add that in fact our main result Theorem \ref{main} is more general than (SM) in the following sense: We define (cf. Definition \ref{ada} below) an arbitrary adapted continuous stochastic process 
$$
X^{x,y,T}:[0,T]\times \big(\Omega,\IFF,(\IFF_t)_{t\in [0,T]},\IP\big)\longrightarrow M
$$
to be an adapted Brownian bridge from $x$ to $y$ at the time $T$, if the law of $X^{x,y,T}$ is equal to $\IP^{x,y,T}$ and if $X^{x,y,T}$ has a certain time-inhomogeneous Markoff property. In case the law of  $X^{x,y,T}$ is equal to $\IP^{x,y,T}$, then this Markoff property is shown to be automatically satisfied for $(\IFF_t)_{t\in [0,T]}=(\IFF^{X^{x,y,T}}_t)_{t\in [0,T]}$ (cf. Lemma \ref{shoa} below). In this context, \emph{our main result Theorem \ref{main} states that in fact every adapted Brownian bridge is a continuous semimartingale.} In particular, this result entails that (cf. Corollary \ref{sjoa} below) every adapted Brownian bridge on a geodesically complete Riemannian manifold can be horizontally lifted to principal bundles that are equipped with a connection.


\vspace{4mm}

\textbf{Acknowledgements:} I would like to thank A. Thalmaier for a helpful discussion concerning \cite{thal}. It is also a pleasure to thank Shu Shen for a very careful reading of the manuscript. This research has been financially supported by the SFB 647: Raum-Zeit-Materie. 

\section{Main results}

Let $M\equiv (M,g)$ be a smooth connected Riemannian $m$-manifold, with $\Delta$ its Laplace-Beltrami operator. Let $\Id(x,y)$ denote the geodesic distance, and $B(x,r)$ corresponding open balls. We denote with $p(t,x,y)$, $t>0$, $x,y\in M$, the minimal nonnegative heat kernel on $M$, that is, for each fixed $y$, the function $p(\bullet,\bullet,y)$ is the pointwise minimal nonnegative smooth fundamental solution of the following heat equation in $(0,\infty)\times M$, 
$$
(\partial /\partial t-(1/2)\Delta)p(\bullet,\bullet,y)=0,\>\>p(t,\bullet,y)\to \delta_y\>\>\text{ as $t\to 0+$.}
$$
It follows that $(t,x,y)\mapsto p(t,x,y)$ is jointly smooth, and the connectedness of $M$ implies in fact the positivity $p(t,x,y)>0$. With $\Id\mu$ the Riemannian volume measure we define
$$
P_tf(z):=\int  p(t,z,w)f(w)\Id\mu (w),\>\> \text{ for every }\>\> f\in\bigcup_{q\in [1,\infty]}\IL^q(M), \>z\in M,\> t>0.
$$
Then $(t,z)\to P_tf(z)$ is smooth in $(0,\infty)\times M$. \vspace{2mm}

Given $T>0$ we denote with $\Omega_T(M)$ the Wiener space of continuous paths $\omega:[0,T]\to M$. We give the latter the topology of uniform convergence. Let $\IFF^T$ denote the Borel-sigma algebra on $\Omega_T(M)$, and let $\IFF^T_*:=(\IFF^T_t)_{t\in [0,T]}$ denote the filtration of $\IFF^T$ which is generated by the underlying canonical coordinate process. Note here that $\IFF^T=\IFF^T_T$. The following result is well-known (cf. \cite{baer} for a detailed proof):

\begin{Propandef}\label{wiener} 1. For every $x_0\in M$, \emph{the Wiener measure $\IP^{x_0,T}$ from $x_0$ with terminal time $T$} is defined to be the unique sub-probability measure on $(\Omega_T(M),\IFF^T)$ which satisfies
\begin{align*}
&\IP^{x_0,T}\{\omega\in \Omega_T(M):\omega(t_1)\in A_1,\dots,\omega(t_n)\in A_n\} \\
&=\int_{A_1}\cdots\int_{A_n} p(\delta_0 ,x_0,x_1) \cdots p(\delta_{n-1} ,x_{n-1},x_n) \Id \mu(x_1)\cdots \Id\mu(x_n)
\end{align*}
for all $n\in\IN_{\geq 1}$, all partitions $0=t_0<t_1< \dots<t_{n-1}<t_n=T$ and all Borel sets $A_1,\dots,A_n\subset M$, where $\delta_j:=t_{j}-t_{j-1}$.\\
2. For every $x_0,y_0\in M$, the \emph{pinned Wiener measure $\IP^{x_0,y_0,T}$ from $x_0$ to $y_0$ with terminal time $T$} is defined to be the unique probability measure on $(\Omega_T(M),\IFF^T)$ which satisfies
\begin{align*}
 \IP^{x_0,y_0,T}(A)=\f{1}{p(T,x_0,y_0)}\int_A p(1-t,\omega(t),y_0)\Id \IP^{x_0,T}(\omega)\>\text{ for all $0\leq t<T$, and all $A\in \IFF^T_t$.}
\end{align*}
\end{Propandef}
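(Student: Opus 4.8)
\emph{Overview and the two analytic inputs.} I would construct and characterize the two measures in parallel, the decisive analytic facts being the Chapman--Kolmogorov identity $\int_M p(s,x,z)p(t,z,y)\,\Id\mu(z)=p(s+t,x,y)$ for the minimal heat kernel (valid on \emph{any} connected $M$, since it merely records the semigroup law $P_{s+t}=P_sP_t$) together with the sub-Markov bound $\int_M p(t,x,z)\,\Id\mu(z)=P_t1(x)\le 1$, the latter explaining why $\IP^{x_0,T}$ is in general only a sub-probability measure (mass is lost precisely when $M$ is stochastically incomplete). Throughout I would use that $M$, being locally compact and second countable, is a Polish space, so that the Kolmogorov extension theorem applies on $M^{[0,T]}$, and that the uniform topology on $\Omega_T(M)$ has the cylinder $\sigma$-algebra as its Borel $\sigma$-algebra $\IFF^T$.

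\emph{Uniqueness.} In part~1 the prescribed finite-dimensional marginals determine $\IP^{x_0,T}$ on the $\pi$-system of cylinder sets, which generates $\IFF^T$, so Dynkin's $\pi$-$\lambda$ lemma gives uniqueness at once. In part~2 the defining relation fixes the restriction $\IP^{x_0,y_0,T}|_{\IFF^T_t}$ for every $t<T$; since paths are continuous, $X_T=\lim_{t\uparrow T}X_t$ is measurable with respect to $\sigma\big(\bigcup_{t<T}\IFF^T_t\big)$, whence $\IFF^T=\sigma\big(\bigcup_{t<T}\IFF^T_t\big)$, and the same monotone-class argument yields uniqueness.

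\emph{Existence.} For part~1, the marginals are consistent under insertion and deletion of intermediate times by Chapman--Kolmogorov, and deletion of the terminal time costs a factor $\le 1$ by sub-Markovianity, so the Kolmogorov extension theorem produces a sub-probability measure on $M^{[0,T]}$; to realize it on $\Omega_T(M)$ I would pass to a continuous version, either via the theory of regular strongly local Dirichlet forms (the form of $\tfrac12\Delta$ being regular and strongly local, its associated Hunt process is a diffusion with continuous paths and transition density $p$) or, more concretely, via the Kolmogorov--Chentsov criterion fed by short-time heat-kernel estimates on relatively compact charts. For part~2 the crux is that the right-hand side of the defining formula is independent of $t\in[0,T)$: writing a generic $A\in\IFF^T_t$ as a cylinder based on times $s_1<\cdots<s_k\le t$ and inserting the explicit marginals of part~1, the entire $t$-dependence sits in the last integration $\int_M p(t-s_k,x_{s_k},w)\,p(T-t,w,y_0)\,\Id\mu(w)=p(T-s_k,x_{s_k},y_0)$, which is independent of $t$ by Chapman--Kolmogorov. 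Taking $k=0$ in the same computation gives total mass $p(T,x_0,y_0)^{-1}\int_M p(t,x_0,w)\,p(T-t,w,y_0)\,\Id\mu(w)=1$, so each restriction is a genuine probability measure (the normalization restores full mass even when $\IP^{x_0,T}$ is sub-probabilistic), and these consistent restrictions assemble, through the Kolmogorov extension applied to the explicit bridge marginals $p(T,x_0,y_0)^{-1}\,p(t_1,x_0,x_1)\cdots p(t_n-t_{n-1},x_{n-1},x_n)\,p(T-t_n,x_n,y_0)$, into the desired measure.

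\emph{Main obstacle.} I expect the genuinely delicate point to be the behaviour at the terminal time: one must show that the bridge measure is carried by paths continuous up to and including $T$ with $X_T=y_0$ almost surely. On $[0,T)$ continuity is inherited from $\IP^{x_0,T}$ through the local absolute continuity encoded in the defining formula, but the convergence $X_t\to y_0$ as $t\uparrow T$ is exactly the singular endpoint phenomenon driving the whole paper, and must be extracted from the short-time concentration $p(T-t,\cdot,y_0)\to\delta_{y_0}$ rather than from any global Gaussian bound, since the latter is unavailable on a complete manifold without curvature assumptions.
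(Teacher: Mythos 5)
The paper itself offers no proof of this statement: it is quoted as well known, with \cite{baer} cited for the detailed construction, so your proposal can only be measured on its own merits. Its skeleton is sound — uniqueness of both measures via Dynkin's $\pi$-$\lambda$ theorem on cylinder sets (using $\IFF^T=\sigma\big(\bigcup_{t<T}\IFF^T_t\big)$, which you justify correctly by path continuity), and the identification of the real content of part 2, namely that the defining relation is independent of $t$, which you correctly reduce to the Chapman--Kolmogorov identity; the same identity gives total mass one, explaining why the bridge is a genuine probability measure even though $\IP^{x_0,T}$ is not.

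However, the existence arguments have two genuine gaps. First, in part 1 the prescribed family of finite-dimensional distributions is \emph{not} consistent in Kolmogorov's sense: as you yourself observe, deleting the terminal time loses mass when $\int_M p(T,x_0,z)\,\Id\mu(z)<1$, and Kolmogorov's extension theorem then simply does not apply as stated. This is repairable — adjoin a cemetery point (one-point compactification, with the kernels extended Markovianly) or normalize by the common total mass $\int_M p(T,x_0,z)\,\Id\mu(z)$ shared by all marginals — but the repair exposes a point you gloss over: when $M$ is stochastically incomplete there is \emph{no} continuous $M$-valued version on $[0,T]$ of the unrestricted process; what is true is that the Hunt process attached to the regular strongly local Dirichlet form of $\frac12\Delta$ has continuous paths up to its lifetime $\zeta$, and $\IP^{x_0,T}$ is the image of its law restricted to the event $\{\zeta>T\}$. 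So of your two proposed routes only the Dirichlet-form one works; a Kolmogorov--Chentsov argument "on relatively compact charts" cannot be assembled globally (recall that part 1 is asserted for arbitrary connected $M$, with no completeness hypothesis). Second, and more seriously, in part 2 you explicitly leave the decisive step — that the bridge law is carried by paths continuous \emph{at} $T$ with value $y_0$ — as an "obstacle", and the tool you point to, the concentration $p(T-t,\cdot\,,y_0)\,\Id\mu\to\delta_{y_0}$, only yields $X_t\to y_0$ \emph{in probability}, not the almost sure statement needed to realize the measure on $\Omega_T(M)$. A complete and fairly economical fix: construct the bridge from $y_0$ to $x_0$ as well; on any initial segment $\IFF^T_S$, $S<T$, its law is absolutely continuous with respect to $\IP^{y_0,T}$ (density $p(T,y_0,x_0)^{-1}p(T-S,\omega(S),x_0)$), hence it inherits from the Wiener measure almost sure uniform continuity near time $0$ with initial value $y_0$; the time-reversal symmetry of the finite-dimensional bridge densities (symmetry of $p$) then transports this into almost sure continuity of the $x_0$-to-$y_0$ bridge at its terminal time. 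Without some such argument, $\IP^{x_0,y_0,T}$ has not been shown to exist as a measure on $\Omega_T(M)$ at all, so this is a missing step, not a detail. (Incidentally, you silently and correctly corrected the paper's typo $p(1-t,\omega(t),y_0)$ to $p(T-t,\omega(t),y_0)$.)
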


It has been shown by E. Hsu \cite{hsu2} that the pinnded Wiener measure satisfies a natural large deviation principle under geodesic completeness.\\
The following well-known facts follow straightforwardly from the definitions and will be used repeatedly in the sequel:

\begin{Remark} 1. For every $x,y \in M$ one has
$$
\IP^{x,T}\{\omega \in\Omega_T(M): \omega(0)=x\}=1=\IP^{x,y,T}\{\omega\in  \Omega_T(M): \omega(0)=x,\omega(T)=y\},
$$
as it should be. Furthermore $\IP^{x,T}(\Omega_T(M))\leq 1$, whereas $\IP^{x,y,T}(\Omega_T(M))= 1$, which reflects the fact that \lq\lq{}paths with explosion time\rq\rq{} that are initially and terminally pinned on $M$ cannot explode.\\
2. One has the following \emph{time reversal symmetry} of the pinned Wiener measure: The pushforward of $\IP^{x,y,T}$ with respect to the $\IFF^T/\IFF^T$ measurable map $\Omega_T(M)\to \Omega_T(M)$ given by $\omega\mapsto \omega(T-\bullet)$ is precisely $\IP^{y,x,T}$.
\end{Remark}

Now we can give:

\begin{Definition}\label{ada} Let $(\Omega,\IFF,\IP)$ be a probability space, and let 
$$
X^{x,y,T}: [0,T]\times\Omega \longrightarrow M
$$
be a continuous process\footnote{which is thus automatically jointly measurable; furthermore, in the sequel we will identify indistinguishable process.}. Then $X^{x,y,T}$ is called \emph{a Brownian bridge from $x$ to $y$ with terminal time $T$} on $M$, if $(\widetilde{X^{x,y,T}})_*\IP=\IP^{x,y,T}$, where 
$$
\widetilde{X^{x,y,T}}: \Omega\longrightarrow \Omega_T(M),\>\>\widetilde{X^{x,y,T}}(\omega):=X^{x,y,T}_{\bullet}(\omega)
$$
denotes the induced $\IFF/\IFF^T$ measurable map. \\
In this situation, if in addition $X^{x,y,T}$ is adapted to a filtration $\IFF_*:=(\IFF_t)_{t\in [0,T]}$ of $\IFF$, then $X^{x,y,T}$ is called an \emph{$\IFF_*$-(adapted) Brownian bridge}, if in addition there holds the following time-inhomogeneous Markoff property: For all numbers $0\leq S<T$, all bounded $\IFF_{S}$-measurable $\Phi:\Omega\to \IR$, and all bounded continuous (thus $\IFF^T$-measurable) $\Psi:\Omega_T(M)\to\IR$ one has
$$
\mathbb{E}\left[ \Phi\cdot\Psi(X^{x,y,T}_{\min(S+\bullet,T)})\right]=\mathbb{E}\left[\Phi\cdot\int \Psi\Big(\omega(\min(\bullet,T-S)\Big) \Id\IP^{X^{x,y,T},y,T-S}(\omega)\right].
$$
\end{Definition}

The particular form of the latter Markoff property is motivated by the fact that every Brownian bridge satisfies this Markoff property with respect to its own filtration:

\begin{Lemma}\label{shoa} Let $(\Omega,\IFF,\IP)$ be a probability space, and let 
$$
X^{x,y,T}: [0,T]\times\Omega \longrightarrow M
$$
be a Brownian bridge from $x$ to $y$ with terminal time $T$. Then, with $\IFF^{X^{x,y,T}}_*$ the filtration generated by $X^{x,y,T}$, one has the following time-inhomogeneous Markoff property: For all numbers $0\leq S<T$, all bounded $\IFF^{X^{x,y,T}}_{S}$-measurable $\Phi:\Omega\to \IR$, and all bounded continuous (thus $\IFF^T$-measurable) $\Psi:\Omega_T(M)\to\IR$ one has
$$
\mathbb{E}\left[ \Phi\cdot\Psi(X^{x,y,T}_{\min(S+\bullet,T)})\right]=\mathbb{E}\left[\Phi\cdot\int \Psi\Big(\omega\big(\min(\bullet,T-S)\big)\Big) \Id\IP^{X^{x,y,T},y,T-S}(\omega)\right],
$$
in other words, $X^{x,y,T}$ is an $\IFF^{X^{x,y,T}}_*$-Brownian bridge.
\end{Lemma}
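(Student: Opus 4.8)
The plan is to transport both sides to the path space $\Omega_T(M)$ and to reduce the claim to the Markov property of the coordinate process under the pinned Wiener measure $\IP^{x,y,T}$. I first set up the measurability bookkeeping. Since $X^{x,y,T}$ is continuous, $\IFF^{X^{x,y,T}}_S=\sigma(X^{x,y,T}_r:0\le r\le S)$, so by the Doob--Dynkin lemma every bounded $\IFF^{X^{x,y,T}}_S$-measurable $\Phi$ factors as $\Phi=\phi\circ\widetilde{X^{x,y,T}}$ for some bounded, $\IFF^T_S$-measurable $\phi:\Omega_T(M)\to\IR$, where $\IFF^T_S=\sigma(\omega\mapsto\omega(r):r\le S)$. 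Writing $\theta_S:\Omega_T(M)\to\Omega_T(M)$, $\theta_S(\omega)(t):=\omega(\min(S+t,T))$, for the shift and $e_S(\omega):=\omega(S)$ for the evaluation, one has $\Psi(X^{x,y,T}_{\min(S+\bullet,T)})=(\Psi\circ\theta_S)\circ\widetilde{X^{x,y,T}}$, while the inner integral on the right-hand side equals $(G\circ e_S)\circ\widetilde{X^{x,y,T}}$ with
\[
G(z):=\int \Psi\big(\eta(\min(\bullet,T-S))\big)\,\Id\IP^{z,y,T-S}(\eta),
\]
the starting point $X^{x,y,T}$ in the superscript being read as the position $X^{x,y,T}_S$ at time $S$. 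Pushing both expectations forward along $(\widetilde{X^{x,y,T}})_*\IP=\IP^{x,y,T}$, the asserted identity becomes
\[
\mathbb{E}^{x,y,T}\big[\phi\cdot(\Psi\circ\theta_S)\big]=\mathbb{E}^{x,y,T}\big[\phi\cdot(G\circ e_S)\big]\qquad\text{for all bounded }\IFF^T_S\text{-measurable }\phi,
\]
i.e.\ the Markov property $\mathbb{E}^{x,y,T}[\Psi\circ\theta_S\mid\IFF^T_S]=G\circ e_S$ a.s.

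It therefore remains to prove this Markov property of $\IP^{x,y,T}$. By the functional monotone class theorem it suffices to treat cylinder functionals $\Psi(\omega)=\prod_{i=1}^k f_i(\omega(u_i))$ with bounded continuous $f_i$ and $0\le u_1<\dots<u_k\le T$, together with cylinder $\phi$ depending on finitely many times in $[0,S]$; inserting $S$ into the latter partition we may assume $\omega(S)$ is among its arguments. For such functionals I would compute both expectations from the finite-dimensional densities of $\IP^{x,y,T}$, which are obtained by combining the finite-dimensional distributions of $\IP^{x,T}$ from Proposition \ref{wiener} with the Radon--Nikodym density $p(T-t,\omega(t),y)/p(T,x,y)$ on $\IFF^T_t$. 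Concretely, for past times $0<r_1<\dots<r_p=S$ and future times $S<v_1<\dots<v_q<T$ (where $v_i:=\min(S+u_i,T)$, $w_i:=\omega(v_i)$ and $x_S:=\omega(S)$) the joint density is
\[
\frac{1}{p(T,x,y)}\,\kappa\,p(v_1-S,x_S,w_1)\,p(v_2-v_1,w_1,w_2)\cdots p(T-v_q,w_q,y),
\]
with $\kappa$ the product of heat kernels over the past times ending in $x_S$.

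Dividing the future block by $p(T-S,x_S,y)$ turns it into exactly the finite-dimensional density of the bridge $\IP^{x_S,y,T-S}$ at the times $v_i-S\le T-S$, so integrating the $w_i$ against $\prod_i f_i$ yields $p(T-S,x_S,y)\,G(x_S)$. Hence the left-hand expectation collapses to the integral of $\phi\cdot\kappa\cdot p(T-S,x_S,y)\cdot G(x_S)/p(T,x,y)$ over the past variables; but since the finite-dimensional density of $\IP^{x,y,T}$ at the past times together with $S$ carries precisely the factor $p(T-S,x_S,y)/p(T,x,y)$, this is exactly the expansion of $\mathbb{E}^{x,y,T}[\phi\cdot(G\circ e_S)]$. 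This proves the identity for cylinder functions, and the monotone class theorem upgrades it to all bounded continuous $\Psi$. The main obstacle is this density-matching step: one must split the bridge density at the intermediate time $S$ and recognize the forward block as the density of $\IP^{x_S,y,T-S}$, which is exactly where the normalizing factors telescope through the Chapman--Kolmogorov semigroup identity $\int p(s,a,c)p(t,c,b)\,\Id\mu(c)=p(s+t,a,b)$. A minor point is the boundary case $v_i=T$ (when $S+u_i\ge T$): there both sides simply evaluate $f_i$ at $y$ by the pinning $\omega(T)=y$, so the degeneracy of $p(0,\cdot,y)$ is harmless.
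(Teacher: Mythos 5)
Your proposal is correct, but it takes a genuinely different route from the paper. Both proofs share the same first step: by Doob--Dynkin, factor $\Phi$ through $\widetilde{X^{x,y,T}}$ and push everything forward to the canonical space $(\Omega_T(M),\IFF^T,\IFF^T_*,\IP^{x,y,T})$, reducing the claim to a Markov property of the coordinate process under the pinned Wiener measure. From there the paper follows Sznitman's argument: since the defining relation of the pinned Wiener measure is only available on $\IFF^T_t$ for $t<T$, it truncates the path functional at time $T-\delta$, applies that defining relation twice (once for $\IP^{x,y,T}$, once for $\IP^{X_S,y,T-S}$) together with the Markov property of the unpinned (sub-probability) Wiener measure, and then lets $\delta\to 0+$ by dominated convergence --- this limit is exactly where the standing hypothesis that $\Psi$ be \emph{continuous} and bounded is used, and it explains why the lemma is phrased for continuous $\Psi$. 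You instead avoid any truncation and limit: you reduce to cylinder functionals by a functional monotone class argument and verify the identity by matching finite-dimensional densities, splitting the bridge density at time $S$ and recognizing the forward block as the density of $\IP^{x_S,y,T-S}$. Your route buys a formally stronger conclusion (the identity for all bounded \emph{measurable} $\Psi$, continuity being irrelevant) at the cost of more bookkeeping, which you mostly acknowledge but partially gloss over: the measurability in $z$ of the kernel $G_\Psi(z)=\int\Psi\,\Id\IP^{z,y,T-S}$ must be carried along the monotone class induction (for cylinder $\Psi$ it follows from continuity of the heat kernel, and the class of $\Psi$ for which $G_\Psi$ is measurable and the identity holds is closed under bounded monotone limits), and the boundary times $v_i=T$ must be stripped off \emph{before} writing the density, using the pinning $\IP^{x,y,T}\{\omega(T)=y\}=1$ and $\IP^{z,y,T-S}\{\eta(T-S)=y\}=1$ from Remark 2.2, exactly as you indicate. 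Ultimately both arguments rest on the same heat-kernel factorization of the bridge; yours makes it explicit in coordinates, the paper's keeps it packaged in the abstract defining relations.
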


\begin{proof} As by the Doob-Dynkin Lemma one can write $\Phi=F(X^{x,y,T})$ for some $\IFF^T_S$-measurable function $F: \Omega_T(M)\to \IR$, we can and we will assume that the underlying filtered probability space is given by $(\Omega_T(M),\IFF^T,\IFF^T_*,\IP^{x,y,T})$ with its the coordinate process $X$. Now we can follow the proof of the Euclidean case which is given in Sznitman\rq{}s book \cite{sznit}, pp. 139/140: For all $0<\delta <T$ we have, with $X$ the coordinate process on $\Omega_T(M)$,
\begin{align*}
&p(T,x,y)\mathbb{E}^{x,y,t}\left[\Phi\cdot\int \Psi\Big(\omega\big(\min(\bullet,T-S-\delta)\big)\Big) \Id\IP^{X^{x,y,T},y,T-S}(\omega)\right]\\
&=\mathbb{E}^{x,T}\left[\Phi\cdot p(T-S, X_S,y)\int \Psi\Big(\omega\big(\min(\bullet,T-S-\delta)\big)\Big) \Id\IP^{X^{x,y,T},y,T-S}(\omega)\right]\\
&=\mathbb{E}^{x,T}\left[\Phi\cdot \int \Psi\Big(\omega\big(\min(\bullet,T-S-\delta)\big)\Big) p(\delta, \omega(T-\delta-S),y) \Id\IP^{X^{x,y,T},T-S}(\omega)\right]\\
&=\mathbb{E}^{x,T}\left[\Phi\cdot \Psi\Big(X_{\min(S+\bullet,T-S-\delta)}\Big) p(\delta, X_{T-\delta},y) \right]\\
&=p(T,x,y)\mathbb{E}^{x,y,T}\left[\Phi\cdot \Psi\Big(X_{\min(S+\bullet,T-S-\delta)}\Big) \right],
\end{align*}
where we have used the defining relation of the pinned wiener measure for the first two equalities, the usual Markoff property of the Wiener measure (in the sense of a sub-probability measure) for the third equality, and once again the defining relation of the pinned wiener measure for the last equality. Finally, taking $\delta \to 0+$ completes the proof using dominated convergence, noting that $\Psi$ is continuous and bounded.
\end{proof}


As we allow filtered probability spaces that need not satisfy the usual assumptions, and as there exist definitions of the term  'semimartingale' (such as 'good integrators') that do not make any sense on such spaces, we add:

\begin{Remark} In the sequel, given $T>0$ and a filtered probability space $(\Omega,\IFF,(\IFF_t)_{t\in [0,T]},\IP)$ we will call a real-valued continuous process
$$
Y : [0,T]\times\Omega \longrightarrow \IR
$$
a \emph{continuous semimartingale w.r.t. $\IFF_*:=(\IFF_t)_{t\in [0,T]}$}, if there exist continuous processes
$$
Y_1,Y_2 : [0,T]\times\Omega \longrightarrow \IR
$$
such that 
\begin{itemize}
\item $Y_1$ is adapted to $\IFF_*$ with paths having a finite variation
\item $Y_2$ is an $\IFF_*$-local martingale
\item $Y=Y_1+Y_2$.
\end{itemize}
We recall further that, following L. Schwartz, a continuous $\IFF_*$-adapted manifold-valued process
$$
X: [0,T]\times\Omega \longrightarrow M
$$
is called a \emph{continuous semimartingale w.r.t. $\IFF_*$}, if for all smooth $f:M\to \IR$ the process
$$
f(X):[0,T]\times\Omega \longrightarrow \IR
$$
is a real-valued continuous semimartingale with respect to $\IFF_*$ in the sense of the former definition. With this definition, it follows that if $X$ as above is a continuous semimartingale w.r.t. $(\Omega,\IFF,(\IFF_t)_{t\in [0,T]},\IP)$, then it is also one w.r.t. the minimal extension of the latter filtration which satisfies the usual assumptions (of completeness and right-continuity). Furthermore, given a continuous $\IFF_*$-adapted $X$ as above, if for every smooth $f:M\to \IR$ one can find a sequence of $\IFF_*$-stopping times $\tau_n:\Omega\to [0,T]$ which announces $T$ in a way that $f(X_{\min(\bullet,\tau_n)})$ is a continuous $\IFF_*$-semimartingale, then $X$ is already a continuous $\IFF_*$-semimartingale.
\end{Remark}

The latter probabilistic localization leads to a simple geometric localization:

\begin{Lemma}\label{lo} Assume we are given $T>0$, a filtered probability space $(\Omega,\IFF,(\IFF_t)_{t\in [0,T]},\IP)$, and a continuous $\IFF_*:=(\IFF_t)_{t\in [0,T]}$-adapted continuous process
$$
X : [0,T]\times\Omega \longrightarrow M
$$
such that for all smooth \emph{compactly supported} $\phi:M\to\IR$ the process $\phi(X)$ is a continuous $\IFF_*$-semimartingale. Then $X$ is a continuous $\IFF_*$-semimartingale.
\end{Lemma}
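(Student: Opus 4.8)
The plan is to deduce the conclusion from Schwartz's definition, which demands that $f(X)$ be a real semimartingale for \emph{every} smooth $f:M\to\IR$, by reducing each such $f$ to the compactly supported functions covered by the hypothesis. The reduction is carried out through the stopping-time localization recorded in the Remark preceding this Lemma. Since $M$ is a connected (second countable) manifold it is $\sigma$-compact, so I would first fix an exhaustion $U_1\subset\overline{U_1}\subset U_2\subset\overline{U_2}\subset\cdots$ of $M$ by relatively compact open sets with $\bigcup_n U_n=M$, and set
$$
\tau_n:=\inf\{t\in[0,T]: X_t\notin U_n\}\wedge T,
$$
the truncated first hitting time of the closed set $M\setminus U_n$.

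The first thing to check is that each $\tau_n$ is a genuine $\IFF_*$-stopping time; this is the delicate point precisely because the filtered space is not assumed to satisfy the usual conditions. For the continuous adapted process $X$ the map $s\mapsto\Id(X_s,M\setminus U_n)$ is continuous, so that
$$
\{\tau_n\le t\}=\Big\{\inf_{s\in([0,t]\cap\IQ)\cup\{t\}}\Id(X_s,M\setminus U_n)=0\Big\}\in\IFF_t,
$$
which requires no right-continuity. Next, because $t\mapsto X_t$ is continuous on the compact interval $[0,T]$, the image $X([0,T],\omega)$ is a compact subset of $M$ and is therefore contained in some $U_n$; for every such $n$ one has $\tau_n(\omega)=T$. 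Hence $\tau_n\uparrow T$ pointwise (indeed $\tau_n=T$ for all large $n$), so $(\tau_n)_n$ is exactly the kind of announcing sequence for $T$ that the Remark requires.

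For the matching step I would fix an arbitrary smooth $f:M\to\IR$ and, for each $n$, choose a cutoff $\chi_n\in\ICC_{\mathrm{cpt}}(M)$ with $\chi_n\equiv1$ on the compact set $\overline{U_n}$ (say $\mathrm{supp}\,\chi_n\subset U_{n+1}$); then $\phi_n:=\chi_n f\in\ICC_{\mathrm{cpt}}(M)$ coincides with $f$ on $\overline{U_n}$. By continuity of $X$ one has $X_{\min(t,\tau_n)}\in\overline{U_n}$ for all $t$ (for $t<\tau_n$ the path is still in $U_n$, and $X_{\tau_n}\in\overline{U_n}$ as a limit of such points), whence
$$
f\big(X_{\min(\bullet,\tau_n)}\big)=\phi_n\big(X_{\min(\bullet,\tau_n)}\big).
$$
By hypothesis $\phi_n(X)$ is a continuous $\IFF_*$-semimartingale, and stopping at $\tau_n$ preserves this property, since one may stop the finite-variation part and the local-martingale part separately (optional stopping of a local martingale yields a local martingale, again with no need for the usual conditions). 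Thus $f(X_{\min(\bullet,\tau_n)})$ is a continuous $\IFF_*$-semimartingale for every $n$, and the localization in the Remark then gives that $X$ itself is a continuous $\IFF_*$-semimartingale.

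I expect the only genuinely subtle point to be the verification that the $\tau_n$ are stopping times in the absence of the usual conditions; the cutoff construction and the stability of semimartingales under stopping are routine, and the whole mechanism works because the compactness of the path image forces the random exhaustion to terminate, so that on each piece the behaviour of $f$ at infinity is irrelevant.
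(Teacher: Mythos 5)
Your proof is correct and takes essentially the same approach as the paper's: an exhaustion by relatively compact open sets, the first exit times $\tau_n$, cutoffs $\phi_n\equiv 1$ on $\overline{U_n}$, and the localization from the preceding Remark. You are in fact slightly more careful than the paper at one point: the paper asserts that $(\phi_n f)(X)$ \emph{coincides} with $f(X_{\min(\bullet,\tau_n)})$, which is not literally true (the path may re-enter $U_n$ after $\tau_n$); the correct statement, which you give, is that the two processes coincide after stopping at $\tau_n$, combined with the stability of continuous semimartingales under stopping.
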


\begin{proof} Let $f$ be an arbitrary smooth function on $M$, let $(U_n)_{n\in\IN}$ be an open relatively compact exhaustion of $M$. Then, as $X$ is continuous and adapted, and $U_n$ is open, the first exit time $\tau_n$ of $X$ from $U_n$ is a stopping time, for each $n\in\IN$, and $(\tau_n)$ announces $T$. If $\phi_n$ is a smooth compactly supported function $M$ with $\phi_n\equiv 1$ on $\overline{U_n}$, then by assumption $(\phi_n f)(X)$ is a continuous semimartingale which coincides with $f(X_{\min(\bullet,\tau_n)})$.
\end{proof}

Now we can formulate our main result:

\begin{Theorem}\label{main} Assume that $M$ is geodesically complete, let $(\Omega,\IFF,(\IFF_t)_{t\in [0,T]},\IP)$ be a filtered probability space, and let $x,y\in M$, $T>0$. Assume furthermore that $X^{x,y,T}$ is an $\IFF_*:=(\IFF_t)_{t\in [0,T]}$-Brownian bridge from $x$ to $y$ with terminal time $T$. Then for all smooth compactly supported $f:M\to\IR$ one has
$$
\mathbb{E}\left[\int^T_0 \left|\Id \log p^y(T-r,X^{x,y,T}_r)\right|\left| \Id f(X^{x,y,T}_r) \right| \Id r\right]<\infty,
$$
and the real-valued process
\begin{align*}
&X^{x,y,T,f}:[0,T]\times\Omega \longrightarrow \IR,\>\>X^{x,y,T,f}_s:=f(X^{x,y,T}_s)-f(X^{x,y,T}_0)-(1/2)\int^s_0\Delta f(X^{x,y,T}_r)\Id r\\
&\>\>\>-\int^s_0 \big(\Id \log p^y(T-r,X^{x,y,T}_r),\Id f( X^{x,y,T}_r)\big)\Big)\Id r
\end{align*}
is a continuous $\IFF_*$-local martingale. In particular, $X^{x,y,T}$ is a continuous semimartingale with respect to $\IFF_*$.
\end{Theorem}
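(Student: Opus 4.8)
The plan is to pass to the canonical filtered space $(\Omega_T(M),\IFF^T,\IFF^T_*,\IP^{x,y,T})$ carrying the coordinate process $X$, to prove both assertions there, and then to transport them back to the abstract bridge $X^{x,y,T}$. This reduction is legitimate because the integrability assertion depends only on the law of $X^{x,y,T}$, which by hypothesis equals $\IP^{x,y,T}$, while the local martingale property will be transferred using the time-inhomogeneous Markoff property of Definition \ref{ada}: conditionally on $\IFF_S$, the shifted process $X^{x,y,T}_{S+\bullet}$ is distributed as a bridge governed by $\IP^{X^{x,y,T}_S,y,T-S}$, so any conditional martingale identity reduces to the corresponding canonical identity. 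Finally, since $f$ is compactly supported, once $f(X^{x,y,T})$ is exhibited as a semimartingale, Lemma \ref{lo} upgrades this to the semimartingale property of $X^{x,y,T}$ itself.

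For the integrability I would use that under $\IP^{x,y,T}$ the marginal law of $X_r$ has the explicit density $\rho_r(z)=p(r,x,z)\,p(T-r,z,y)/p(T,x,y)$ against $\Id\mu$. Writing $\big|\Id\log p^y(T-r,z)\big|\,p(T-r,z,y)=\big|\Id_z p(T-r,z,y)\big|$ and using that $|\Id f|$ is bounded and supported in a fixed compact $K$, one obtains
\begin{align*}
\mathbb{E}^{x,y,T}\!\left[\int_0^T\big|\Id\log p^y(T-r,X_r)\big|\,\big|\Id f(X_r)\big|\,\Id r\right]\le\frac{\|\Id f\|_{\infty}}{p(T,x,y)}\int_0^T\!\int_K p(r,x,z)\,\big|\Id_z p(T-r,z,y)\big|\,\Id\mu(z)\,\Id r .
\end{align*}
I would split the $r$-integral at $T/2$. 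On $(0,T/2)$ the factor $\big|\Id_z p(T-r,z,y)\big|$ is smooth and bounded on $K$ while $\int_K p(r,x,z)\,\Id\mu(z)\le 1$, so this part is plainly finite. On $(T/2,T)$, substituting $s=T-r$ and bounding $p(r,x,z)\le C_1$ for $z\in K$, one is reduced to $\int_0^{T/2}\!\int_K\big|\Id_z p(s,z,y)\big|\,\Id\mu(z)\,\Id s$, and here I would invoke Proposition \ref{help} with a ball $B(z_0,R_0)\supset K\cup\{y\}$ and $R_0\ge T$, giving $\big|\Id_z p(s,z,y)\big|\le C\big(s^{-1/2}+s^{-1}\Id(z,y)\big)p(s,z,y)$ for $z\in K$, $s\le T/2$. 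The $s^{-1/2}$ term integrates against $\int_K p(s,z,y)\,\Id\mu(z)\le 1$ to the finite quantity $\int_0^{T/2}Cs^{-1/2}\,\Id s$.

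The main obstacle is the remaining term $\int_0^{T/2}s^{-1}\!\int_K\Id(z,y)\,p(s,z,y)\,\Id\mu(z)\,\Id s$: one may \emph{not} merely bound $\Id(z,y)\le\sup_{z\in K}\Id(z,y)$, since $\int_0^{T/2}s^{-1}\,\Id s$ diverges, so the concentration of the heat kernel near the diagonal must be exploited to produce a compensating factor $\sqrt{s}$. Concretely I would establish the local first-moment bound $\int_K\Id(z,y)\,p(s,z,y)\,\Id\mu(z)\le C\sqrt{s}$ for small $s$; since the whole estimate has already been localized to a relatively compact neighbourhood of $K\cup\{y\}$, such a bound is available from the local Gaussian upper bounds for $p$ on relatively compact domains (or, via Cauchy--Schwarz, from the corresponding second-moment bound). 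With this the singular contribution becomes $\int_0^{T/2}Cs^{-1/2}\,\Id s<\infty$, completing the integrability; this estimate is precisely the delicate input that earlier treatments obtained only under global curvature bounds.

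For the drift identification I would argue on the canonical space: for $S<T$ the measures $\IP^{x,y,T}$ and $\IP^{x,T}$ are equivalent on $\IFF^T_S$ with density $p(T-S,X_S,y)/p(T,x,y)$, under $\IP^{x,T}$ the coordinate process is a Brownian motion so that Itô's formula makes $f(X_s)-f(X_0)-\tfrac12\int_0^s\Delta f(X_r)\,\Id r$ a martingale, and a Girsanov computation along the logarithmic derivative of this density removes exactly the drift term present in the definition of $X^{x,y,T,f}$, making the latter an $\IFF^T_*$-martingale on every $[0,S]$ with $S<T$. The integrability just proved shows the drift integral converges as $s\uparrow T$, so $X^{x,y,T,f}$ extends continuously to $[0,T]$, and stopping at the deterministic times $T_n\uparrow T$ exhibits it as a continuous local martingale on all of $[0,T]$. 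Transferring this to the abstract bridge via the Markoff property of Definition \ref{ada} and Lemma \ref{shoa} (the time shift turning $T-r$ into $(T-S)-(r-S)$, matching the terminal time $T-S$), and noting that the three non-martingale summands have almost surely finite variation (the last precisely by the integrability), yields the decomposition of $f(X^{x,y,T})$ into a local martingale plus a finite-variation process; Lemma \ref{lo} then concludes.
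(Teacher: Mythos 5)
Your proposal is correct in substance, and it splits into two halves that should be judged separately. The integrability half is essentially the paper's own argument: the paper also works on the canonical space, exploits the marginal law of $X_r$ under $\IP^{x,y,T}$ (presented there via the time-reversal symmetry $\omega\mapsto\omega(T-\bullet)$, which is just another way of performing your substitution $s=T-r$), splits the time integral at $T/2$, and then invokes Proposition \ref{help} exactly as you do. The one place where you assert rather than prove is precisely where the paper does its remaining work: the first-moment bound $\int_K \Id(z,y)\,p(s,z,y)\,\Id\mu(z)\leq C\sqrt{s}$ does follow from the local Gaussian upper bound of Proposition \ref{help}, but one still needs to cover $B(z_0,R)$ by balls of radius below the local injectivity radius, pass to polar coordinates, and use the Gaussian moment identity $\int_0^\infty \mathrm{e}^{-au^2}u^m\,\Id u=C_m a^{-m/2-1/2}$ to convert $r^{-1-m/2}\int \mathrm{e}^{-C\Id(z,y)^2/r}\Id(z,y)\,\Id\mu(z)$ into $Cr^{-1/2}$; this should be written out. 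The martingale half is where you genuinely depart from the paper: you identify the drift by Girsanov's theorem applied to the density process $p(T-s,X_s,y)/p(T,x,y)$ on $\IFF^T_S$, $S<T$, and transfer back to the abstract bridge through the Markoff property, whereas the paper never leaves the abstract bridge: its Claim 1 (following Driver) establishes
$$
\f{\Id}{\Id s}\mathbb{E}\left[1_A f(X^{x,y,T}_s)\right]=\mathbb{E}\left[1_A\Big((1/2)\Delta f(X^{x,y,T}_s)+\big(\Id\log p^y(T-s,X^{x,y,T}_s),\Id f(X^{x,y,T}_s)\big)\Big)\right]
$$
for $A\in\IFF_t$, $t<s<T$, by a purely analytic semigroup computation (heat equation plus formal self-adjointness of $\Delta$), and then integrates in $s$. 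Your route buys familiarity -- it is the classical $h$-transform description of the bridge -- at the price of needing stochastic calculus for manifold-valued processes under the sub-probability measure $\IP^{x,T}$ (quadratic covariation of $f(X)$ with the density process, martingale notions for a sub-probability) plus an extra transfer step; the paper's route uses no stochastic calculus and handles the abstract filtration $\IFF_*$ in one stroke.

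Two points need tightening. First, the transfer step is not automatic: Definition \ref{ada} applies only to \emph{bounded continuous} functionals $\Psi$ on $\Omega_T(M)$, so you must check that for $t<s<T$ the functional $\omega\mapsto f(\omega(s-t))-f(\omega(0))-\int_0^{s-t}\big((1/2)\Delta f+(\Id\log p^y((T-t)-u,\bullet),\Id f)\big)(\omega(u))\,\Id u$ is bounded and continuous; it is, but only because the drift is integrated up to $s-t<T-t$ and is paired with the compactly supported $\Id f$, and this deserves a sentence. Second, your final localization is wrong as stated: stopping at deterministic times $T_n\uparrow T$ with $T_n<T$ does not exhibit a local martingale on $[0,T]$, because the localizing stopping times must eventually capture the terminal time. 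The correct argument: by the integrability, the drift integral converges absolutely a.s., so $X^{x,y,T,f}$ is continuous on all of $[0,T]$; stop it at $\tau_n:=\inf\{t:|X^{x,y,T,f}_t|\geq n\}\wedge T$, which by path continuity satisfies $\tau_n=T$ eventually a.s.; each stopped process is a bounded martingale on $[0,T)$ and hence, by continuity at $T$ and dominated convergence for conditional expectations, a martingale on $[0,T]$. With these repairs your proof is complete.
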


The following localized heat kernel bounds will play a central role in the proof of Theorem \ref{main}:

\begin{Proposition}\label{help} Let $M$ be geodesically complete.\\
a) For every $z_0\in M$ and every $R>0$ there exist constants $C_j>0$ (which depend on the geometry of $M$ in a neighbourhood of $B(z_0,R)$) such that for all $(t,x,y)\in (0,R]\times B(z_0,R) \times B(z_0,R)$ one has 
\begin{align*}
C_1t^{-m/2}\mathrm{e}^{-C_2\f{\Id(x,y)^2}{t}}\leq p(t,x,y)\leq C_3t^{-m/2}\mathrm{e}^{-C_4\f{\Id(x,y)^2}{t}}.
\end{align*}
b) For every $z_0\in M$ and every $R>0$ there exists a constant $C>0$ (which depends on the geometry of $M$ in a neighbourhood of $B(z_0,R)$) such that for all $(t,x,y)\in (0,R]\times B(z_0,R) \times B(z_0,R)$ one has 
\begin{align*}
&\left|\Id \log p^y(t,x)\right|\leq C(t^{-1/2}+t^{-1}\Id(x,y) ),\quad\text{where for every fixed $y\in M$ we have set}\\
&p^y:(0,\infty)\times M\longrightarrow (0,\infty),\>p^y(t,x):=p(t,x,y),
\end{align*}
and where here and in the sequel $\Id \log p^y(t,x):=\Id_x \log p^y(t,x)$, that is, the exterior differential of a function on space-time is always understood with respect to the space variable.
\end{Proposition}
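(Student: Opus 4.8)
The statement has two parts that I would prove in the given order, since b) follows from a): once the two-sided local Gaussian bounds of a) are in hand, the gradient bound of b) is obtained by feeding them into the localized Hamilton-type estimate of Arnaudon/Thalmaier \cite{thal}. The guiding principle throughout is that geodesic completeness together with Hopf--Rinow makes $\overline{B(z_0,3R)}$ compact, so that the sectional curvature, its covariant derivatives, and a positive lower bound on the injectivity radius are all controlled on a neighbourhood of $\overline{B(z_0,2R)}$; hence all constants may be allowed to depend on the geometry of $M$ near $B(z_0,R)$, which is exactly what the Proposition permits.

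For part a) I would argue by comparison with bounded geometry. The plan is to modify $g$ outside $B(z_0,2R)$ so as to produce a geodesically complete manifold $\widetilde M$ of globally bounded geometry that is isometric to $M$ on a neighbourhood of $\overline{B(z_0,R)}$. On $\widetilde M$ the classical Li--Yau machinery applies and yields global two-sided Gaussian bounds for its heat kernel $\widetilde p$, with constants uniform on $(0,R]$. One then transfers these to $p$ by the small-time locality of the heat kernel: for $x,y$ deep inside the common region one has $|p-\widetilde p|(t,x,y)\lesssim \mathrm{e}^{-c/t}$, which follows from the parabolic maximum principle and Gaussian exit-time estimates, and by choosing the buffer $B(z_0,2R)\setminus B(z_0,R)$ wide enough the error is dominated by the Gaussian lower bound. (Alternatively one can avoid modifying $M$ altogether: the bounded local geometry yields a local Faber--Krahn inequality, hence on- and off-diagonal upper bounds via Nash's argument and Grigor'yan's integrated maximum principle, while the matching lower bound comes from the local parabolic Harnack inequality chained along a minimizing geodesic.)

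For part b) fix $y$ and set $u(s,z):=p^y(s,z)$, a positive solution of $\partial_s u=\tfrac12\Delta u$. At a point $(t,x)$ with $0<t\le R$ and $x\in B(z_0,R)$ I would apply the localized estimate of \cite{thal} on the backward parabolic cylinder $Q:=[t/2,t]\times B(x,r)$, with $r$ small and the constants depending only on a lower Ricci bound on $B(z_0,2R)$; being Hamilton-type, it produces an inequality of the schematic form
\begin{align*}
\left|\Id\log p^y(t,x)\right|^2\le \frac{c_0}{t}\,\log\frac{\sup_{Q}u}{u(t,x)}.
\end{align*}
The role of the backward cylinder is to keep the supremum away from the singularity at $s=0$: applying a) on $B(z_0,2R)$, its upper bound gives $\sup_Q u\le C_3(t/2)^{-m/2}\le C_3' t^{-m/2}$, while its lower bound gives $u(t,x)\ge C_1 t^{-m/2}\mathrm{e}^{-C_2\Id(x,y)^2/t}$. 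Hence $\log(\sup_Q u/u(t,x))\le c_1+C_2\Id(x,y)^2/t$, and using $\sqrt{a+b}\le\sqrt a+\sqrt b$,
\begin{align*}
\left|\Id\log p^y(t,x)\right|\le \sqrt{c_0}\,t^{-1/2}\sqrt{c_1+C_2\Id(x,y)^2/t}\le C\big(t^{-1/2}+t^{-1}\Id(x,y)\big),
\end{align*}
which is the assertion of b).

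The main obstacle is part a): producing genuinely two-sided Gaussian bounds with purely local constants, in a regime where the global estimate (\ref{intro4}) and the unrestricted Li--Yau theory are unavailable. Both the comparison route (constructing $\widetilde M$, and checking that the $\mathrm{e}^{-c/t}$ error is beaten by the Gaussian lower bound for a sufficiently wide buffer) and the Harnack-chaining route demand care with the dependence of constants on the local geometry. By contrast, once a) is secured, part b) is largely mechanical; the only delicate point is to invoke the technical estimate of \cite{thal} on the backward cylinder in such a way that its constants stay local and the supremum factor $\sup_Q u$ remains controllable.
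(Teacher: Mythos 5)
Your overall architecture (prove a) first, then feed it into the localized Arnaudon--Thalmaier estimate to get b)) is exactly the paper's. The two parts differ in execution. For a), the paper's proof is a short citation-based argument: it invokes Saloff-Coste's localized two-sided heat kernel bounds \cite{saloff} (recorded in the appendix, Section \ref{ssaa}), which control $p(t,x,y)$ above and below by $\mathrm{e}^{\pm A t}\mu(B(x,\sqrt t))^{-1/2}\mu(B(y,\sqrt t))^{-1/2}\mathrm{e}^{-A'\Id(x,y)^2/t}$ with constants depending only on $m$ and a local lower Ricci bound, and then converts the volume factors into powers of $t$ via the local Cheeger--Gromov and volume-doubling (Bishop--Gromov) inequalities. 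Your surgery-plus-transfer route is genuinely different and can in principle be made to work, but each step you treat as a black box is a real piece of work: constructing the bounded-geometry completion $\widetilde M$; proving the locality estimate $|p-\widetilde p|\lesssim \mathrm{e}^{-c/t}$, which via the parabolic maximum principle requires an a priori off-diagonal upper bound on $p$ itself at distances of the order of the buffer width --- i.e.\ exactly the Faber--Krahn/integrated-maximum-principle machinery of your ``alternative'' route; and the buffer bookkeeping. In effect, your alternative route amounts to re-proving the theorem of \cite{saloff} that the paper simply quotes, so it is correct in outline but strictly more work, and the sketch leaves the hard parts unproved.

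For b) there is a concrete issue in your key step. Your ``schematic form'' $|\Id\log p^y(t,x)|^2\le \frac{c_0}{t}\log\frac{\sup_Q u}{u(t,x)}$ is Hamilton's \emph{global} estimate, with the logarithmic ratio appearing to the \emph{first} power. The localized estimate of \cite{thal}, as recorded in Theorem \ref{ham}, instead carries the factor $\bigl(4+\log\frac{\sup u}{u}\bigr)^2$ \emph{squared}; this square is precisely the price of localization. With the squared factor, your displayed chain of inequalities gives $|\Id\log p^y(t,x)|\le C\,t^{-1/2}\bigl(1+\Id(x,y)^2/t\bigr)=C\bigl(t^{-1/2}+t^{-3/2}\Id(x,y)^2\bigr)$, which is strictly weaker than the asserted $C\bigl(t^{-1/2}+t^{-1}\Id(x,y)\bigr)$ in the regime $\Id(x,y)\gg\sqrt t$ (take $\Id(x,y)$ of order $R$ and $t\to 0+$). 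So your derivation does not go through as written unless Theorem 7.1 of \cite{thal} really provides the first-power (Hamilton) form; you must check this, since your whole computation hinges on it. (The paper's own proof of b) passes from the squared form of Theorem \ref{ham} to the linear-in-$\Id(x,y)/t$ conclusion without comment, so the same point deserves scrutiny there as well.) Two mitigating remarks: the weaker bound $t^{-1/2}+t^{-3/2}\Id(x,y)^2$ would still be enough for the integrability statement (Claim 2) in the proof of Theorem \ref{main}, because the Gaussian factor from a) absorbs the extra $\Id(x,y)^2$; but it does not yield Proposition \ref{help} b) as stated, which is the statement you were asked to prove.
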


\begin{proof} a) The localized heat kernel bounds that we have recorded in the appendix (cf. Section \ref{ssaa}), show the existence of constants $A_j>0$, $j=1,\dots,4$, that only depend on $m$ and a lower bound of $\mathrm{Ric}$ in a neighbourhood of $B(z_0,R)$, such that for all $(t,x,y)\in (0,R]\times B(z_0,R) \times B(z_0,R)$ one has 
\begin{align*}
&\mathrm{e}^{-A_1 t}\mu(B(x,\sqrt{t})^{-1/2}\mu(B(y,\sqrt{t}))^{-1/2}\mathrm{e}^{-A_2\f{\Id(x,y)^2}{t}}\\
&\leq p(t,x,y)\\
&\leq \mathrm{e}^{A_3t}\mu(B(x,\sqrt{t}))^{-1/2}\mu(B(y,\sqrt{t})^{-1/2}\mathrm{e}^{-A_4\f{\Id(x,y)^2}{t}}.
\end{align*} 
As $\sqrt{t}\leq\sqrt{R}<R+1$ and $B(x,2\sqrt{t})\subset B(z_0,4(R+1))$, $B(y,2\sqrt{t})\subset B(z_0,4(R+1))$, applying Bishop-Gromov\rq{}s volume estimates locally (cf. Section \ref{ssaa}) we can pick $A_5,A_6>0$  (that only depend on $m$ and a lower bound of $\mathrm{Ric}$ on say $B(z_0,4(R+1))$), such that
$$
\max\{\mu(B(x,\sqrt{t})),\mu(B(y,\sqrt{t}))\}\leq   A_5t^{m/2}\mathrm{e}^{A_6 \sqrt{t}}\leq A_5\mathrm{e}^{A_6 \sqrt{R}}t^{m/2},
$$
which yields the estimate
$$
p(t,x,y)\geq A_5\mathrm{e}^{-A_1 R}\mathrm{e}^{-A_6 \sqrt{R}}t^{-m/2}\mathrm{e}^{-A_2\f{\Id(x,y)^2}{t}}.
$$
On the other hand, using again $\sqrt{t}\leq\sqrt{R}<R+1$, $B(x,2\sqrt{t})\subset B(z_0,4(R+1))$, $B(y,2\sqrt{t})\subset B(z_0,4(R+1))$, and applying a local volume doubling inequality (cf. Section \ref{ssaa}), we can pick $A_7>0$  (that only depends on $m$ and a lower bound of $\mathrm{Ric}$ on $B(z_0,4(R+1))$) such that 
\begin{align*}
&\min\{\mu(B(x,\sqrt{t})),\mu(B(y,\sqrt{t}))\}\geq (R+1)^{-m} \mathrm{e}^{-A_7(R+1)} \inf_{a\in B(z_0,R) }\mu(B(a,R+1))t^{-m/2}\\
&=:A_8t^{-m/2}.
\end{align*}
Thus we have 
$$
p(t,x,y)\leq \mathrm{e}^{A_3R}A_8t^{-m/2}\mathrm{e}^{-A_4\f{\Id(x,y)^2}{t}},
$$
completing the proof.\\
b) We will use Arnaudon/Thalmaier\rq{}s estimate (Theorem \ref{ham}) as follows: Define $S:= t/2$, $u(s,z):=p^y(s+t/2,z)$ and let $D:=B(z_0,2R)$. We can pick finitely many $w_1,\dots w_l\in D$ such that $B(z_0,R)\subset \bigcup^l_{j=1} B\big(w_j,\Id(w_j,\partial D)/2\big)$. Then with the above choices Theorem \ref{ham} immediately implies
\begin{align*}
\left|\Id \log p^y(x,t)\right|^2&\leq 2\left(\f{2}{t}+\f{\pi^2(m+\beta m+7)}{\min_{j=1,\dots,l}\Id(w_j,\partial D)^2}+ \f{K}{4\beta}+K\right)\\
&\times\left(4+\log\f{\sup_{s\in [0,t/2], z\in\overline{D}} p(t/2+s,z,y)}{p(t,x,y)}\right)^2,
\end{align*}
where $-K\leq 0$ is any lower bound on the Ricci curvature on $D=B(z_0,2R)$, and $\beta>0$ can be chosen arbitrarily. Finally, by part a), we can find constants $c_j>0$ that only depend on $m$ and a lower bound of $\mathrm{Ric}$ in a neighbourhood of $D$, such that
$$
\log\f{\sup_{s\in [0,t/2], z\in\overline{D}} p(t/2+s,z,y)}{p(x,y,t)}\leq c_1+c_2\Id(x,y)^2/t,
$$
showing the inequality
$$
\left|\Id \log p^y(t,x)\right|\leq C\Big(t^{-1/2}+t^{-1}\Id(x,y) +t^{-1/2}\Id(x,y) +1\Big),
$$
which proves the claim (noting that $1\leq R/t$ and $t^{-1/2}\leq R^{-1/2} t^{-1})$.
\end{proof}

\begin{proof}[Proof of Theorem \ref{main}] Let $R_0>0$, $z_0\in M$ be arbitrary. We prove the claim for $x,y\in B(z_0,R_0)$. To this end, we fix an arbitrary smooth compactly supported $f:M\to\IR$.

The proof is divided into four parts:\vspace{2mm}

\emph{Claim 1: With $p^y:=p(\bullet,\bullet,y)$, for every $0\leq t<s<T$, and $A\in \IFF_t$ one has}
$$
\f{\Id}{\Id s} \mathbb{E}\left[  1_A f(X^{x,y,T}_s)  \right]=\mathbb{E}\left[ 1_A \Big( (1/2)\Delta f(X^{x,y,T}_s)+ \big(\Id \log p^y(T-s,X^{x,y,T}_s),\Id f (X^{x,y,T}_s)\big)\Big)\right].
$$

Proof of Claim 1: In principle we follow \cite{driver} here, up to the fact that we have to use the Markoff property of the bridge (which makes the calculation a little more complicated): Using the time-inhomogeneous Markoff property of $X^{x,y,T}$ and the defining relation of the pinned Wiener measure (note that $s-t<T-t$), we can calculate
\begin{align}\nn
& \mathbb{E}\left[  1_A f(X^{x,y,T}_s)  \right]=  \mathbb{E}\left[  1_A \int f(\omega(s-t))\Id \IP^{X^{x,y,T}_t,y,T-t}(\omega)  \right]\\\label{uam}
&=\mathbb{E}\left[  1_A\f{1}{p^y(T-t,X^{x,y,T}_t)} \int p^y(T-s,\omega(s-t))f(\omega(s-t))\Id \IP^{X^{x,y,T}_t,T-t}(\omega)  \right].
\end{align}
Let us define a smooth function
$$
\Psi:[0,T)\times M \longrightarrow \IR,\>\>\Psi_r(z):=p^y(T-r,z)f(z),\>\>(r,z)\in [0,T)\times M.
$$
Then, using that the heat kernel solves the heat equation and that $\Delta$ is formally self-adjoint (note here that $\Psi_s$ has a compact support in $M$), one can easily deduce
$$
\f{\partial}{\partial s}P_{s-t}\Psi_s(z)=P_{s-t}\left[\left(\f{\partial}{\partial s}+(1/2)\Delta\right) \Psi_s\right](z),
$$
an expression, which using the product rule for the Laplace-Beltrami operator and once more that the heat kernel solves the heat equation, is seen to be equal to
\begin{align*}
&=P_{s-t}\left[p^y(T-s,\bullet)(1/2)\Delta f+\big(\Id p^y(T-s,\bullet),\Id f \big)\right](z)\\
&=P_{s-t}\left[p^y(T-s,\bullet)(1/2)\Delta f+p^y(T-s,\bullet)\big(\Id \log p^y(T-s,\bullet),\Id f \big)\right](z).
\end{align*}
Thus, using (\ref{uam}) and using the defining relation of the Wiener measure twice,
\begin{align*}
&\f{\Id}{\Id s} \mathbb{E}\left[  1_A f(X^{x,y,T}_s)  \right]=\f{\Id}{\Id s}\mathbb{E}\left[  1_A\f{1}{p^y(T-t,X^{x,y,T}_t)} P_{s-t}\Psi_s(X^{x,y,T}_t)  \right]\\
&=\mathbb{E}\left[  1_A\f{1}{p^y(T-t,X^{x,y,T}_t)} \f{\partial}{\partial s}P_{s-t}\Psi_s(X^{x,y,T}_t)  \right]\\
&=\mathbb{E}\left[  1_A\f{1}{p^y(T-t,X^{x,y,T}_t)}\right.\\
&\>\>\>\>\>\>\times\left. P_{s-t}\left[p^y(T-s,\bullet)(1/2)\Delta f+p^y(T-s,\bullet)\big(\Id \log p^y(T-s,\bullet),\Id f \big)\right](X^{x,y,T}_t)  \right]\\
&=\mathbb{E}\left[  1_A\f{1}{p^y(T-t,X^{x,y,T}_t)} \int  p^y(T-s,\omega(s-t)) \right.\\
&\>\>\>\>\>\>\left.\times\Big((1/2)\Delta f(\omega(s-t))+\big(\Id \log p^y(T-s,\omega(s-t)),\Id f( \omega(s-t))\big)\Big) \Id\IP^{X^{x,y,T}_t,T-t}(\omega)  \right].
\end{align*}
In view of the defining relation of the pinned Wiener measure, the latter expression is equal to
\begin{align*}
&=\mathbb{E}\left[  1_A \int\Big((1/2)\Delta f(\omega(s-t))+\big(\Id \log p^y(T-s,\omega(s-t)),\Id f( \omega(s-t))\big)\Big)\right. \\
&\left.\>\>\>\>\>\>\>\>\>\times\Id\IP^{X^{x,y,T}_t,y,T-t}(\omega)  \right]
\end{align*}
and using the time-inhomogeneous Markoff property of $X^{x,y,T}$,
$$
=\mathbb{E}\left[  1_A \Big((1/2)\Delta f(X^{x,y,T}_s)+\big(\Id \log p^y(T-s,X^{x,y,T}_s),\Id f( X^{x,y,T}_s)\big)\Big)\right],
$$
which completes the proof of Claim 1.\vspace{2mm}

\emph{Claim 2: One has}
$$
\mathbb{E}\left[\int^T_0 \left|\Id \log p^y(T-r,X^{x,y,T}_r)\right|\left| \Id f(X^{x,y,T}_r) \right| \Id r\right]<\infty.
$$

Proof of Claim 2: Using the time reversal symmetry of the pinned Wiener measure we have (with $X$ the coordinate process on $\Omega_T(M)$)
\begin{align*}
&\mathbb{E}^{x,y,T}\left[\int^T_0 \left|\Id \log p^y(T-r,X_r)\right| \left| \Id f(X_r) \right|\Id r\right]\\
&=\mathbb{E}^{x,y,T}\left[\int^{T/2}_0 \left|\Id \log p^y(T-r,X_r)\right| \left| \Id f(X_r) \right|\Id r\right]\\
&+\mathbb{E}^{y,x,T}\left[\int^{T/2}_0 \left|\Id \log p^y(r,X_r)\right|\left| \Id f(X_r) \right| \Id r\right].
\end{align*}
In the first summand, the time variable $r$ remains uniformly away from the heat kernel singularity, which easily entails that this term is finite (as $f$ has a compact support). It remains to estimate the second summand. To this end, we pick $R>\max(R_0,T)$ large enough such that $ B(z_0,R)\supset\mathrm{supp}(f)$. Using the defining relation of the pinned Wiener measure, and of the Wiener measure, respectively, we have for every $0< r\leq T/2$,
\begin{align*}
&p(T,y,x)\mathbb{E}^{y,x,T}\left[ \left|\Id \log p^y(r,X_r)\right| \left| \Id f(X_r) \right|\right]\\
&=\mathbb{E}^{y,T}\left[p(T-r,X_r,x)  \left|\Id \log p^y(r,X_r)\right| \left| \Id f(X_r) \right| \right]\\
&= \int p(r,y,z)  p(T-r,z,x)  \left|\Id \log p^y(r,z)\right| \left| \Id f(z) \right| \Id\mu(z)\\
&=\int_{B(z_0,R)} p(r,y,z)  p(T-r,z,x)  \left|\Id \log p^y(r,z)\right| \left| \Id f(z) \right| \Id\mu(z)\\
&\leq \left\|\Id f\right\|_{\infty} \int_{B(z_0,R)} p(r,y,z)  p(T-r,z,x)  \left|\Id \log p^y(r,z)\right|  \Id\mu(z)\\
&\leq \Big\{\sup_{u\in [0,T/2],a,b\in B(z_0,R)}p(T-u,a,b)\Big\}\left\|\Id f\right\|_{\infty} \int_{B(z_0,R)} p(r,y,z)    \left|\Id \log p^y(r,z)\right|  \Id\mu(z)\\
&=:A \int_{B(z_0,R)} p(r,y,z)    \left|\Id \log p^y(r,z)\right|  \Id\mu(z).
\end{align*}
Next, using Proposition \ref{help}, we pick a constant $C>0$ such that for all $u\in (0,T]$, $x_1,x_2\in B(z_0,R)$ one has
$$
\left|\Id \log p(u,x_1,x_2)\right|\leq C  u^{-1/2}+C u^{-1}\Id(x_1,x_2)
$$
and
$$
p(u,x_1,x_2)\leq C  u^{-m/2} \mathrm{e}^{-C \f{\Id(x_1,x_2)^2}{u}},
$$
so that using $\int_M p(\alpha,v,w)\Id\mu(w)\leq 1$ for all $\alpha>0$, $v\in M$, 
\begin{align*}
&A^{-1}p(T,y,x)\mathbb{E}^{y,x,T}\left[ \left|\Id \log p^y(r,X_r)\right| \left| \Id f(X_r) \right|\right]\\
&\leq Cr^{-1/2}\int_{B(z_0,R)} p(r,y,z)     \Id\mu(z)+C r^{-1}\int_{B(z_0,R)} p(r,y,z) \Id(y,z)     \Id\mu(z)\\
&\leq Cr^{-1/2}+C^2  r^{-1-m/2}\int_{B(z_0,R)}  \mathrm{e}^{-C \f{\Id(z,y)^2}{r}} \Id(y,z)     \Id\mu(z).
\end{align*}
The first summand is integrable in $r$ from $0$ to $T/2$. For the second summand we proceed as follows: Covering $B(z_0,R)$ with finitely many balls centered in $y$ and having radius $<r_{\mathrm{inj}}(B(z_0,R))$, and using polar coordinates, we can estimate,
\begin{align*}
 & r^{-1-m/2}\int_{B(z_0,R)}  \mathrm{e}^{-C \f{\Id(z,y)^2}{r}} \Id(y,z)     \Id\mu(z)\leq c r^{-1-m/2}\int^{\infty}_0  \mathrm{e}^{-C \f{u^2}{r}} u   u^{m-1}  \Id u\\
&\leq d r^{-1-m/2+m/2+1/2}=dr^{-1/2},
\end{align*}
where $d>0$, $c>0$, which is again an integrable function of $r$ in $[0,T/2]$. Above we have made use of the Gaussian moments
$$
\int^{\infty}_0 \exp(-a u^2)u^m \Id u=C_m a^{-m/2-1/2},\quad a>0. 
$$
This completes the proof of Claim 2.\vspace{2mm}

\emph{Claim 3: The real-valued process
\begin{align*}
&Y:=X^{x,y,T,f}:[0,T]\times\Omega \longrightarrow \IR,\>\>Y_s:=f(X^{x,y,T}_s)-f(X^{x,y,T}_0)-(1/2)\int^s_0\Delta f(X^{x,y,T}_r)\Id r\\
&\>\>\>-\int^s_0 \big(\Id \log p^y(T-r,X^{x,y,T}_r),\Id f( X^{x,y,T}_r)\big)\Big)\Id r
\end{align*}
is a continuous $\IFF_*$-local martingale.} 

Proof of Claim 3: In view of Claim 2 and the fact that $f$ has a compact support, it is sufficient to prove that $Y|_{[0,T)}$ is an $\IFF_*$-martingale. To this end pick arbitrary $0\leq t<s<T$. It follows from applying $\int^t_s\cdots$ to the formula from Claim 1 that 
$$
\mathbb{E}\left[1_A (Y_t-Y_s)\right]=0\>\>\text{ for all $A\in\IFF_t$},
$$
which is equivalent to
$$
\mathbb{E}\left[Y_t|\IFF_s\right]=Y_s\>\>\text{ $\mathbb{P}$-a.s.}
$$

\emph{Claim 4: $f(X^{x,y,T})$ is a continuous $\IFF_*$ semimartingale}.

Proof of Claim 4: We have for all $s\in [0,T]$, $\IP$-a.s.
\begin{align*}
&f(X^{x,y,T}_s)=Y_s\\
&\>\>\>\>\>\>\>\>\>\>\>\>\>+f(X^{x,y,T}_0)+(1/2)\int^s_0\Delta f(X^{x,y,T}_r)\Id r+\int^s_0 \big(\Id \log p^y(T-r,X^{x,y,T}_r),\Id f( X^{x,y,T}_r)\big)\Big)\Id r,
\end{align*}
Thus by Claim 2 and Claim 3 this is a sum of a continuous local martingale and a continuous adapted process with paths having a finite variation. This completes the proof. 
\end{proof}

By standard results on manifold-valued continuous semimartingales \cite{hack}, we have:

\begin{Corollary}\label{sjoa} Given a filtered probability space $(\Omega,\IFF,(\IFF_t)_{t\in [0,T]},\IP)$ which satisfies the usual assumptions, and an $\IFF_*:=(\IFF_t)_{t\in T}$-Brownian bridge $X^{x,y,T}$ from $x$ to $y$ with terminal time $T$. Assume further that we are given a smooth $G$-principal bundle $\pi:P\to M$ together with a connection form $\alpha\in\Omega^1_{\ICC}(P,\mathrm{Lie}(G))$. Then for every $\IFF_0$-measurable random variable $u:\Omega\to P$ with $\pi(u)=x$ $\mathbb{P}$-a.s., there exists a unique (up to indistinguishability) $\alpha$-horizontal $\IFF_*$-lift $U:[0,T]\times\Omega\to P$ with $U_0=u$ $\mathbb{P}$-a.s., that is, $U$ is the uniquely determined continuous $\IFF_*$-semimartingale $U:[0,T]\times\Omega\to P$ which satisfies the following properties:
\begin{itemize}
\item $U_0=u$ $\mathbb{P}$-a.s.
\item $\pi(U_t)=X^{x,y,T}_t$ $\mathbb{P}$-a.s., for all $t\in [0,T]$
\item $\int^t_0\alpha(\underline{\Id} U_s)=0$ (Stratonovic line integral of $\alpha$ along $U$) $\mathbb{P}$-a.s., for all $t\in [0,T]$.
\end{itemize}
\end{Corollary}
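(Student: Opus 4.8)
The plan is to reduce the assertion entirely to the general theory of stochastic horizontal lifts of manifold-valued semimartingales. The only nonstandard ingredient is the semimartingale property of the bridge up to and including its terminal time, which is precisely what Theorem \ref{main} supplies: under the stated hypotheses $X^{x,y,T}$ is a continuous $\IFF_*$-semimartingale on the full closed interval $[0,T]$. Once this is in hand, existence and uniqueness of the lift follow from the standard theory in \cite{hack}, and I would organize the argument as follows.

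First I would encode horizontality as a Stratonovich SDE on $P$. The connection form $\alpha$ determines, at each $p\in P$, the horizontal lift map $h_p\colon T_{\pi(p)}M\to T_pP$, which depends smoothly on $p$ and satisfies $\pi_*\circ h_p=\mathrm{id}$ together with $\alpha\circ h_p=0$. The desired lift $U$ is then the solution of the Stratonovich equation $\underline{\Id}U_s=h_{U_s}\big(\underline{\Id}X^{x,y,T}_s\big)$ with initial condition $U_0=u$. Since $X^{x,y,T}$ is a continuous $\IFF_*$-semimartingale and the filtration satisfies the usual assumptions, such an SDE driven by a manifold-valued semimartingale possesses a unique maximal strong solution by the general existence and uniqueness theory for Stratonovich SDEs on manifolds \cite{hack}. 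The three defining properties are then immediate from the construction: $U_0=u$ by the initial condition; $\pi(U)$ solves on $M$ the same equation that $X^{x,y,T}$ does, with the same starting point (using $\pi_*\circ h=\mathrm{id}$), so pathwise uniqueness forces $\pi(U_t)=X^{x,y,T}_t$; and $\int_0^t\alpha(\underline{\Id}U_s)=0$ because $\alpha\circ h=0$ annihilates the Stratonovich differential of $U$.

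The main technical point is to rule out explosion, so that the solution is in fact defined on all of $[0,T]$ rather than only up to some strictly smaller stopping time. Here the principal-bundle structure is decisive: over a trivializing neighbourhood $P|_{V}\cong V\times G$ the fiber component of $U$ satisfies a right-invariant Stratonovich SDE on the Lie group $G$, driven by the pullback of $\alpha$ along the (nonexploding) base process. Right-invariant equations of this type on a Lie group are complete, so the lift cannot leave the fiber over $X^{x,y,T}$ in finite time as long as the base process does not leave $M$; patching these local solutions together (or directly invoking the completeness statement for horizontal lifts in \cite{hack}) shows that the lifetime of $U$ equals that of $X^{x,y,T}$. Since Theorem \ref{main} guarantees that $X^{x,y,T}$ is a nonexploding semimartingale on the \emph{closed} interval $[0,T]$, the horizontal lift $U$ likewise exists on all of $[0,T]$, including the terminal time $T$ --- which is exactly the point at which the naive approach would break down.

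Finally, uniqueness up to indistinguishability is nothing but the pathwise uniqueness of the Stratonovich SDE already invoked for existence: any two continuous $\IFF_*$-semimartingales satisfying the three listed properties solve the same horizontal equation with the same initial value, hence coincide. I expect the explosion/completeness step to be the only place requiring genuine care, everything else being formal once the semimartingale property from Theorem \ref{main} is available.
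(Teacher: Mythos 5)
Your proposal is correct and follows exactly the paper's route: the paper proves this corollary simply by invoking the standard horizontal-lift theory for manifold-valued continuous semimartingales from \cite{hack}, the sole new ingredient being that Theorem \ref{main} gives the semimartingale property of $X^{x,y,T}$ on the whole closed interval $[0,T]$. Your additional details (the Stratonovich SDE $\underline{\Id}U_s=h_{U_s}(\underline{\Id}X^{x,y,T}_s)$, non-explosion in the fiber via the right-invariant structure over a trivialization, and pathwise uniqueness) are precisely what is inside the cited black box, so the two arguments coincide in substance.
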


\appendix 

\section{Localized heat kernel and volume bounds}\label{ssaa}

For the convenience of the reader we record here some facts on heat-kernels and volumes on geodesically complete Riemannian manifolds. Let $M$ be a geodesically complete connected smooth Riemannian $m$-manifold, with $p(t,x,y)$, $\mu$, and $B(x,r)$ as above. There hold the following facts (cf. Theorem 6.1 and inequality (1), (2) in \cite{saloff}):\\

(i) (Localized heat kernel bounds): \emph{For every $r>0$, $x\rq{}\in M$, there exist constants $A_j>0$ which only depend on $m$ and a lower bound of $\mathrm{Ric}$ in $B(x\rq{},2r)$, such that for all $(t,x,y)\in (0,r^2)\times B(x\rq{},r)\times B(x\rq{},r)$ one has}
\begin{align*}
&\mathrm{e}^{-A_1 t}\mu(B(x,\sqrt{t}))^{-1/2}\mu(B(y,\sqrt{t}))^{-1/2}\mathrm{e}^{-A_2\f{\Id(x,y)^2}{t}}\\
&\leq p(t,x,y)\\
&\leq \mathrm{e}^{A_3t}\mu(B(x,\sqrt{t}))^{-1/2}\mu(B(y,\sqrt{t}))^{-1/2}\mathrm{e}^{-A_4\f{\Id(x,y)^2}{t}}.
\end{align*} 

(ii) (Cheeger-Gromov estimate) \emph{For every $r>0$, $ x\in M$, one has
$$
\mu(B(x,s))\leq |\mathbb{S}^m|s^{m}\mathrm{e}^{\sqrt{(m-1)K}s}\quad\text{ for all $0<s<2r$,}
$$
where $K\geq 0$ is any lower bound of $\mathrm{Ric}\geq -K$ in $B(x ,2r)$.}\vspace{2mm}

(iii) (Local volume doubling property) \emph{For every $r>0$, $ x\in M$, one has
$$
\mu(B(x,s))\leq \mu(B(x,s\rq{})) (s/s\rq{})^m \mathrm{e}^{\sqrt{(m-1)K}s}\quad\text{ for all $0<s\rq{}<s<2r$,}
$$
where again $K\geq 0$ is any number such that $\mathrm{Ric}\geq -K$ in $B(x ,2r)$.}

\section{Arnaudon-Thalmaier\rq{}s local gradient estimate}

We record here the following localized version of Hamilton\rq{}s gradient estimate, which is by Arnaudon and Thalmaier. Let now $M$ be an arbitrary connected smooth Riemannian $m$-manifold.

\begin{Theorem}\label{ham} Let $D\subset M$ be an open relatively compact subset, let $S>0$, and let a continuous function
$$
u: [0,S]\times \overline{D}\longrightarrow (0,\infty)
$$
be given, which is a smooth solution of
$$
\f{\partial }{\partial s} u(s,z) =\f{1}{2}\Delta u(s,z)\>\>\text{ in $(s,z)\in [0,S]\times D$.}
$$
Then for all $K\geq 0$ with $\mathrm{Ric}|_D\geq- K$, all $\beta>0$, $w\in D$, and all $z\in B\big(w,\Id(w,\partial D)/2\big)$ one has the gradient bound
\begin{align*}
\left|\Id \log  u(z,S)\right|^2&\leq 2\left(\f{1}{S}+\f{\pi^2(m+\beta m+7)}{\Id(w,\partial D)^2}+ \f{K}{4\beta}+K\right)\left(4+\log\f{\sup_{[0,S]\times \overline{D}} u}{u(z,S)}\right)^2.
\end{align*}
\end{Theorem}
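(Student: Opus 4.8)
The plan is to follow the stochastic-analysis route of Arnaudon and Thalmaier \cite{thal}: to represent $\Id \log u$ at the base point $(z,S)$ through a Bismut-type derivative formula along the $\tfrac12\Delta$-diffusion, stopped at the exit time from the ball $B(w,\rho)$ with $\rho:=\Id(w,\partial D)/2$, and then to read off the stated bound by Cauchy--Schwarz combined with a damped-parallel-transport control of curvature and an exit-time control of the path-averages. First I would pass to $f:=\log u$, which on $[0,S]\times D$ solves $\partial_s f=\tfrac12\Delta f+\tfrac12|\Id f|^2$, so that the target becomes a pointwise bound on $|\Id f(z,S)|^2$. Note that $\overline{B(w,\rho)}\subset D$ because $\rho<\Id(w,\partial D)$, and $z\in B(w,\rho)$ by hypothesis, so on this ball $u$ is a genuine positive smooth solution. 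Let $(X_s)_{s\ge 0}$ be Brownian motion (generator $\tfrac12\Delta$) from $z$, with parallel transport $\Theta_s$ and damped parallel transport $Q_s$, the $\mathrm{End}(T_zM)$-valued solution of $\tfrac{D}{ds}Q_s=-\tfrac12\,\mathrm{Ric}^{\#}Q_s$, $Q_0=\mathrm{id}$; the assumption $\mathrm{Ric}|_D\ge -K$ yields $\|Q_s\|\le \mathrm{e}^{Ks/2}$ up to the exit time $\tau$ from $B(w,\rho)$.

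Next I would record the two martingale identities that drive everything. By It\^o's formula and the heat equation, $s\mapsto f(S-s,X_s)+\tfrac12\int_0^s|\Id f(S-r,X_r)|^2\,\Id r$ is a local martingale with martingale part $\int_0^{\cdot}\langle \Id f,\Theta_r\,\Id B_r\rangle$. Differentiating the corresponding stopped identity in the starting point $z$ and inserting $Q_s$ to absorb the Weitzenb\"ock/Ricci drift produces a derivative formula expressing $\Id f(z,S)$ as the expectation of $f(S-\cdot,X_\cdot)$ paired against a weighted stochastic integral $\int_0^{\tau\wedge S}\dot\ell_r\,Q_r^{\top}\Theta_r\,\Id B_r$, plus a correction stemming from the nonlinear term $\tfrac12|\Id f|^2$; here $\ell$ is any finite-variation time weight with $\ell_0=0$ and $\ell_{\tau\wedge S}=1$, and the choice $\dot\ell\equiv 1/S$ on $[0,S\wedge\tau]$ is exactly what makes $\int_0^{S}\dot\ell_r^2\,\Id r=1/S$ appear in the final coefficient.

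The squared logarithm and the constant $4$ arise from closing a self-referential estimate. Writing $g:=\log\frac{\sup_{[0,S]\times\overline D}u}{u}\ge 0$, the first identity gives, with $\sigma:=\tau\wedge S$,
\[
\mathbb{E}\big[g(S-\sigma,X_{\sigma})\big]=g(S,z)+\tfrac12\,\mathbb{E}\!\int_0^{\sigma}|\Id f(S-r,X_r)|^2\,\Id r,
\]
so the path-values of $g$ are controlled by the base value $g(S,z)$ plus the average of $|\Id f|^2$. Feeding this back into the Cauchy--Schwarz bound for the derivative formula, the curvature factor $\|Q_r\|\le \mathrm{e}^{Kr/2}$ is balanced against the Hessian term by Young's inequality with the free parameter $\beta$, which is precisely what turns the raw curvature contribution into $\tfrac{K}{4\beta}+K$ (and injects the $\beta m$ dimensional multiplicity from the trace/Hessian term). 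The outcome is a quadratic inequality for $\sup_{B(w,\rho)}|\Id f|$, and solving it produces the closed form $|\Id f(z,S)|^2\le 2C_\star(4+g(z,S))^2$ with $C_\star$ the displayed coefficient.

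Finally, all the geometry is concentrated in the exit-time estimate for $X$ from $B(w,\rho)$: I would compare the radial process $\Id(w,X_s)$ with a one-dimensional model via the Laplacian comparison theorem (legitimate since $\mathrm{Ric}|_D\ge -K$), so that the principal Dirichlet eigenvalue of the interval of length $2\rho=\Id(w,\partial D)$ enters as $\pi^2/(2\rho)^2=\pi^2/\Id(w,\partial D)^2$; collecting the dimensional multiplicities from the $m$ components of $\Id B_r$ and from the correction term assembles the factor $\pi^2(m+\beta m+7)/\Id(w,\partial D)^2$. The step I expect to be the main obstacle is exactly this bookkeeping: extracting the sharp $\pi^2$-type exit-time bound from the comparison geometry (where the non-smoothness of $\Id(w,\cdot)$ along the cut locus must be handled by the usual barrier/Calabi argument), and then merging the stochastic-integral, curvature, and time-weight contributions so that the constants coalesce into the precise coefficient $\tfrac1S+\tfrac{\pi^2(m+\beta m+7)}{\Id(w,\partial D)^2}+\tfrac{K}{4\beta}+K$ rather than into a merely qualitatively correct one.
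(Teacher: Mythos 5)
The first thing to note is that the paper does not prove Theorem \ref{ham} at all: it is imported as ``an immediate consequence of Theorem 7.1 in \cite{thal}'', so there is no in-paper argument to compare yours against, and the only meaningful benchmark is the stochastic proof of Arnaudon--Thalmaier that the paper cites. Your sketch does follow that same route (logarithmic transformation, damped parallel transport, a Bismut-type derivative formula localized by the exit time from $B(w,\Id(w,\partial D)/2)$, and a self-referential estimate closed by optional stopping), and the identities you actually write down are correct: $f=\log u$ satisfies $\partial_s f=\tfrac12\Delta f+\tfrac12|\Id f|^2$, the process $f(S-s,X_s)+\tfrac12\int_0^s|\Id f(S-r,X_r)|^2\,\Id r$ is a local martingale, and the stopped identity $\mathbb{E}\big[g(S-\sigma,X_\sigma)\big]=g(S,z)+\tfrac12\,\mathbb{E}\int_0^\sigma|\Id f(S-r,X_r)|^2\,\Id r$ is exactly the mechanism through which $\log\big(\sup u/u\big)$ enters the final bound.

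Nevertheless, as a proof the proposal has a genuine gap, and you concede it yourself: everything that produces the \emph{stated} inequality is deferred as ``bookkeeping''. Concretely, you never write the localized derivative formula (so it is not verified how the nonlinear term $\tfrac12|\Id f|^2$ and the damped transport combine without producing extra terms), you do not derive the Dirichlet-eigenvalue constant $\pi^2/\Id(w,\partial D)^2$ or the multiplicity $m+\beta m+7$ from any explicit barrier or time weight, you do not perform the Young-inequality step that is supposed to yield $K/(4\beta)+K$, and you do not explain where the constant $4$ in $\big(4+\log(\sup u/u)\big)^2$ comes from when the ``quadratic inequality'' is solved. These constants are the entire content of the theorem: a Hamilton-type bound with unspecified constants would be much softer, but it is precisely the explicit coefficient that Proposition \ref{help}, and hence Theorem \ref{main}, consumes. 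A further unaddressed point is that closing a self-referential quadratic estimate for $\sup_{B(w,\rho)}|\Id f|$ presupposes that this supremum is finite and that the estimate propagates from the single point $(z,S)$ to the supremum consistently; handling this (typically by working with an auxiliary maximum over a parabolic cylinder, or by an interior-regularity argument) is part of the actual proof in \cite{thal}. So what you have is a correct roadmap to the cited result, not yet a proof of it.
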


This is an immediate consequence of Theorem 7.1 in \cite{thal}, where an entire probabilistic proof has been given. We refer the reader also to \cite{wang} for analgous techniques.

\end{document}